\newtheorem{thm}{Theorem}
\newtheorem{lem}[thm]{Lemma}
\newtheorem{cor}[thm]{Corollary}
\theoremstyle{definition}
\newcommand{\GL}{\mathrm{GL}}
\newcommand{\Lie}{\operatorname{\mathrm{Lie}}}
\newcommand{\sym}[1]{\mathfrak{S}_{#1}}
\newcommand{\Ind}{\operatorname{Ind}}
\newcommand{\sgn}{\operatorname{sgn}}
\newcommand{\E}{\boldsymbol{/\!\backslash}}
\begin{document}
\title{The Schur functor on tensor powers}

\author{Kay Jin Lim}
\author{Kai Meng Tan}
\address{Department of Mathematics, National University of Singapore, Block S17, 10 Lower Kent Ridge Road, Singapore 119076.}
\email[K. J. Lim]{matlkj@nus.edu.sg}
\email[K. M. Tan]{tankm@nus.edu.sg}

\date{February 2011}

\thanks{Supported by MOE Academic Research Fund R-146-000-135-112.}

\thanks{2010 {\em Mathematics Subject Classification.} 20G43, 20C30}

\begin{abstract}
Let $M$ be a left module for the Schur algebra $S(n,r)$, and let $s \in \mathbb{Z}^+$.  Then $M^{\otimes s}$ is a $(S(n,rs), F\sym{s})$-bimodule, where the symmetric group $\sym{s}$ on $s$ letters acts on the right by place permutations.  We show that the Schur functor $f_{rs}$ sends $M^{\otimes s}$ to the $(F\sym{rs},F\sym{s})$-bimodule $F\sym{rs} \otimes_{F(\sym{r} \wr \sym{s})} ((f_rM)^{\otimes s} \otimes F\sym{s})$.  As a corollary, we obtain the image under the Schur functor of the Lie power $L^s(M)$, exterior power $\E^s(M)$ of $M$ and symmetric power $S^s(M)$.
\end{abstract}

\maketitle

\section{Introduction} \label{S:intro}

The representations of general linear groups and symmetric groups are classical objects of study. Following the work by Schur in 1901, there is an important connection between the polynomial representations of general linear groups and the representations of symmetric groups via the Schur functor. In this short article, we examine the images of tensor powers, Lie powers, symmetric powers and exterior powers under the Schur functor.

Our motivation comes from our study of the Lie module $\Lie(s)$ of the symmetric group $\sym{s}$ on $s$ letters.  This may be defined as the left ideal of the group algebra $F\sym{s}$ generated by the Dynkin-Specht-Wever element
$$
\upsilon_s = (1-c_2)\dotsm (1-c_s),
$$
where $c_k$ is the descending $k$-cycle $(k, k-1, \dotsc, 1)$ (note that we compose the elements of $\sym{s}$ from right to left).  This module is also the image under the Schur functor of the Lie power $L^s(V)$, which is the homogeneous part of degree $s$ of the free Lie algebra on an $n$-dimensional vector space $V$ with $n \geq s$.

In our study, we found that the knowledge of the image of $L^s(M)$ (where $M$ is a general $S(n,r)$-module; note that $V$ is naturally an $S(n,1)$-module) under the Schur functor will be most useful.  For example, the formula we provide here is used by Bryant and Erdmann \cite{BE} to understand the summands of $\Lie(s)$ lying in non-principal blocks.  It is also used by Erdmann and the authors \cite{ELT} in their study of the complexity of $\Lie(s)$.

As we shall see, the image of $L^s(M)$ under the Schur functor can be easily obtained as a corollary by understanding the image under the Schur functor of the tensor power $M^{\otimes s}$ as a $(F\sym{rs},F\sym{s})$-bimodule.  The latter result can be regarded as a refinement of a special case of \cite[2.5, Lemma]{DE}, although our proof is independent of their result.  Besides obtaining the image of $L^s(M)$ under the Schur functor from this refinement, we can also get those of the symmetric powers $S^s(M)$ and exterior powers $\E^s(M)$.  Our proofs are fairly elementary.

The organisation of the paper is as follows:  in the next section, we give a background on Schur algebras and a summary of the results we need.  We then proceed in Section \ref{S:main} to state and prove our main results.

\section{Schur algebras}

We briefly discuss Schur algebras and the results we need in this section.  The reader may refer to \cite{G} for more details.

Throughout, we fix an infinite field $F$ of arbitrary characteristic.

Let $n,r \in \mathbb{Z}^+$.  The Schur algebra $S(n,r)$ has a distinguished set $\{ \xi_{\alpha} \mid \alpha \in \Lambda(n,r)\}$ of pairwise orthogonal idempotents which sum to 1, where $\Lambda(n,r)$ is the set of compositions of $r$ with $n$ parts \cite[(2.3d)]{G}.  Thus each left $S(n,r)$-module $M$ has a vector space decomposition
$$
M = \bigoplus_{\alpha \in \Lambda(n,r)} \xi_\alpha M.
$$
We write $M^{\alpha}$ for $\xi_{\alpha}M$, and call it the $\alpha$-weight space of $M$.

Let $\GL_n(F)$ be the general linear group.  There is a surjective algebra homomorphism $e_r : F\GL_n(F) \to S(n,r)$ \cite[(2.4b)(i)]{G}.
If $s$ is another positive integer, and $M_1$ and $M_2$ are left $S(n,r)$- and $S(n,s)$-modules respectively, then $M_1 \otimes_F M_2$ can be endowed with a natural left $S(n,r+s)$-module structure, which satisfies
\begin{equation*} \label{E:tensor}
e_{r+s}(g) (m_1 \otimes m_2) =  (e_r(g)m_1) \otimes (e_s(g)m_2)
\end{equation*}
for all $g \in \GL_n(F)$, $m_1 \in M_1$ and $m_2 \in M_2$.  The weight spaces of $M_1 \otimes_F M_2$ can be described \cite[(3.3c)]{G} in terms of the weight spaces of $M_1$ and $M_2$, as follows:
\begin{equation} \label{E:weight}
(M_1 \otimes_F M_2)^{\gamma} = \bigoplus_{\substack{\alpha \in \Lambda(n,r) \\ \beta \in \Lambda(n,s) \\ \alpha+\beta = \gamma}} M_1^{\alpha} \otimes_F M_2^{\beta}.
\end{equation}
(Here, and hereafter, if $\alpha = (\alpha_1,\dotsc, \alpha_n) \in \Lambda(n,r)$ and $\beta = (\beta_1,\dotsc, \beta_n) \in \Lambda(n,s)$, then $\alpha + \beta = (\alpha_1 + \beta_1,\dotsc, \alpha_n + \beta_n) \in \Lambda(n,r+s)$.)

The symmetric group $\sym{n}$ on $n$ letters acts on $\Lambda(n,r)$ by place permutation:
$\tau \cdot (\alpha_1,\dotsc, \alpha_n) = (\alpha_{\tau^{-1}(1)},\dotsc, \alpha_{\tau^{-1}(n)})$.
We also view $\sym{n}$ as the subgroup of $\GL_n(F)$ consisting of permutation matrices.  Thus, $\sym{n}$ also acts naturally on left $S(n,r)$-modules via $e_r$.  We have the following lemma:

\begin{lem}\label{L:iso}
Let $n,r \in \mathbb{Z}^+$, and let $M$ be a left $S(n,r)$-module.
Let $\sigma \in \sym{n}$ and $\alpha = (\alpha_1,\dotsc, \alpha_n) \in \Lambda(n,r)$.
\begin{enumerate}
\item[(i)] $e_r(\sigma)$ maps $M^{\alpha}$ bijectively onto $M^{\sigma \cdot \alpha}$.
\item[(ii)] If $\sigma(i) = i$ for all $i$ such that $\alpha_i \ne 0$, then $e_r(\sigma)$ acts as identity on $M^{\alpha}$.  (Equivalently, if $\sigma_1(i) = \sigma_2(i)$ for all $i$ such that $\alpha_i \ne 0$, then $e_r(\sigma_1)m = e_r(\sigma_2)m$ for all $m \in M^{\alpha}$.)
\end{enumerate}
\end{lem}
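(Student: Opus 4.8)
The plan is to reduce both parts to identities between elements of $S(n,r)$ and then verify those identities on a single faithful module, namely the tensor space $E = V^{\otimes r}$ with $V = F^n$, on which $S(n,r)$ acts faithfully and $e_r(g)$ acts as $g^{\otimes r}$ (Schur--Weyl, \cite{G}). Concretely, $E$ has the basis $\{e_{\mathbf i} : \mathbf i \in I(n,r)\}$ indexed by the $r$-tuples $\mathbf i = (i_1,\dots,i_r)$ with entries in $\{1,\dots,n\}$, where $e_{\mathbf i} = e_{i_1} \otimes \dots \otimes e_{i_r}$; the idempotent $\xi_\alpha$ acts as the projection onto $\operatorname{span}\{e_{\mathbf i} : \operatorname{wt}(\mathbf i) = \alpha\}$, and $e_r(\sigma)e_{\mathbf i} = e_{\sigma(\mathbf i)}$ for $\sigma \in \sym{n}$, where $\sigma(\mathbf i) = (\sigma(i_1),\dots,\sigma(i_r))$. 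Since a direct count gives $\operatorname{wt}(\sigma(\mathbf i)) = \sigma \cdot \operatorname{wt}(\mathbf i)$, comparing the two sides on each basis vector yields the commutation relation
\[
e_r(\sigma)\,\xi_\alpha = \xi_{\sigma\cdot\alpha}\,e_r(\sigma)
\]
in $\operatorname{End}_F(E)$, and hence in $S(n,r)$ by faithfulness.

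For part (i), I would feed this relation into an arbitrary left module $M$: if $m \in M^\alpha$ then $m = \xi_\alpha m$, so $e_r(\sigma)m = e_r(\sigma)\xi_\alpha m = \xi_{\sigma\cdot\alpha}e_r(\sigma)m \in M^{\sigma\cdot\alpha}$, showing that $e_r(\sigma)$ maps $M^\alpha$ into $M^{\sigma\cdot\alpha}$. Applying the same to $\sigma^{-1}$ gives a map $M^{\sigma\cdot\alpha} \to M^{\sigma^{-1}\cdot(\sigma\cdot\alpha)} = M^\alpha$; since $e_r$ is an algebra homomorphism with $e_r(\sigma)e_r(\sigma^{-1}) = e_r(1) = 1$, these two maps are mutually inverse, so $e_r(\sigma)$ restricts to a bijection $M^\alpha \to M^{\sigma\cdot\alpha}$.

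For part (ii), the hypothesis that $\sigma(i) = i$ whenever $\alpha_i \neq 0$ first forces $\sigma \cdot \alpha = \alpha$. The crucial point is to upgrade ``preserves $M^\alpha$'' to ``acts as the identity on $M^\alpha$'', and this is exactly where the explicit model on $E$ pays off: every multi-index $\mathbf i$ of weight $\alpha$ has all of its entries in $\operatorname{supp}(\alpha) = \{i : \alpha_i \neq 0\}$, which $\sigma$ fixes pointwise, so $\sigma(\mathbf i) = \mathbf i$ and $e_r(\sigma)e_{\mathbf i} = e_{\mathbf i}$. Hence $e_r(\sigma)\xi_\alpha = \xi_\alpha$ as operators on $E$, and therefore in $S(n,r)$; feeding this into $M$ via $m = \xi_\alpha m$ gives $e_r(\sigma)m = \xi_\alpha m = m$ for all $m \in M^\alpha$. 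The parenthetical equivalent form then follows at once by applying this to $\sigma_2^{-1}\sigma_1$, which fixes $\operatorname{supp}(\alpha)$ pointwise.

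The only genuinely delicate step is part (ii): part (i) shows that $e_r(\sigma)$ merely permutes the weight spaces, and even when $\sigma \cdot \alpha = \alpha$ this says by itself only that $M^\alpha$ is stabilised, not that $\sigma$ acts trivially on it. The faithful tensor-space model resolves this cleanly, since a weight-$\alpha$ index tuple simply cannot involve any coordinate moved by $\sigma$. I would also keep careful track of the place-permutation convention $(\tau\cdot\alpha)_i = \alpha_{\tau^{-1}(i)}$ throughout, since the identity $\operatorname{wt}(\sigma(\mathbf i)) = \sigma\cdot\operatorname{wt}(\mathbf i)$, on which everything rests, is sensitive to it. As an alternative derivation of the commutation relation in (i) that avoids tensor space altogether, one can instead use $e_r(t) = \sum_\alpha t^\alpha \xi_\alpha$ for diagonal $t$, conjugate the identity $e_r(\sigma)e_r(t) = e_r(\sigma t \sigma^{-1})e_r(\sigma)$, and compare coefficients of the monomials $t^\alpha$, which are linearly independent because $F$ is infinite.
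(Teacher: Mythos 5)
Your proof is correct, but it takes a genuinely different route from the paper's. The paper disposes of part (i) by citing \cite[(3.3a)]{G} outright, and proves part (ii) in one line from Green's combinatorial definition of $e_r$: for $m$ lying in a weight space associated to a multi-index $\mathbf{i}$, one has $e_r(\sigma)m = \xi_{\sigma\mathbf{i},\mathbf{i}}m$, and when $\sigma$ fixes $\operatorname{supp}(\alpha)$ pointwise we get $\sigma\mathbf{i} = \mathbf{i}$, so $\xi_{\mathbf{i},\mathbf{i}} = \xi_\alpha$ acts as the identity on $M^\alpha$. You instead work inside the tensor-space model $E = V^{\otimes r}$, establish the operator identities $e_r(\sigma)\xi_\alpha = \xi_{\sigma\cdot\alpha}e_r(\sigma)$ and, under the hypothesis of (ii), $e_r(\sigma)\xi_\alpha = \xi_\alpha$ there, and then transport them to an arbitrary module by faithfulness of the $S(n,r)$-action on $E$ (Schur--Weyl, \cite[\S 2.6]{G}). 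Both arguments ultimately rest on the same combinatorial fact, namely that a multi-index of weight $\alpha$ has all its entries in $\operatorname{supp}(\alpha)$; the difference is that yours is self-contained (you reprove (3.3a) rather than cite it) at the cost of invoking faithfulness of tensor space, which the paper never needs because it computes with the basis elements $\xi_{\mathbf{i},\mathbf{j}}$ of $S(n,r)$ directly. Your closing alternative derivation via diagonal matrices and linear independence of the monomials $t^\alpha$ over the infinite field $F$ is essentially Green's own proof of (3.3a), so that variant would return you to the paper's path for part (i). All your steps check out: the identity $\operatorname{wt}(\sigma(\mathbf{i})) = \sigma\cdot\operatorname{wt}(\mathbf{i})$ is right under the convention $(\tau\cdot\alpha)_i = \alpha_{\tau^{-1}(i)}$, bijectivity in (i) is correctly secured by $e_r(\sigma)e_r(\sigma^{-1}) = e_r(1) = 1$, and reducing the parenthetical statement in (ii) to the case of $\sigma_2^{-1}\sigma_1$ is exactly the right move.
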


\begin{proof}
  Part (i) is (3.3a) of \cite{G} (and its proof).  For part (ii), it follows from the definition of $e_r$ in \cite[\S2.4]{G} that $e_r(\sigma) m = \xi_{\sigma\mathbf{i},\mathbf{i}}m$ for $m$ lying in a weight space associated to $\mathbf{i}$, so that $e_r(\sigma) m = m$ when $m \in M^{\alpha}$, and $\sigma$ satisfies the condition in (ii).
\end{proof}

In the case where $n \geq r$, let $$\omega_r = (\underbrace{1,\dotsc, 1}_{r \text{ times}}, \underbrace{0,\dotsc, 0}_{n-r \text{ times}}) \in \Lambda(n,r),$$
The subalgebra $\xi_{\omega_r} S(n,r) \xi_{\omega_r}$ of $S(n,r)$ is isomorphic to $F\sym{r}$ \cite[(6.1d)]{G}.  This induces the Schur functor $f_r : {}_{S(n,r)}\textbf{mod} \to {}_{F\sym{r}}\textbf{mod}$ which sends a left $S(n,r)$-module $M$ to its weight space $M^{\omega_r}$.  The $\sym{r}$-action on $f_rM = M^{\omega_r}$ is that via $e_r$ and viewing $\sym{r}$ as a subgroup of $\GL_n(F)$ via the embedding $\sym{r} \subseteq \sym{n} \subseteq \GL_n(F)$, i.e. if $m \in f_rM$ and $\sigma \in \sym{r}$, then
\begin{equation*} \label{E:symaction}
\sigma \cdot m = e_r(\sigma)  m.
\end{equation*}

\section{Main results} \label{S:main}

Let $M$ be a left $S(n,r)$-module, and let $s \in \mathbb{Z}^+$.  The $s$-fold tensor product $M^{\otimes s}$ is then a left $S(n,rs)$-module, and it also admits another commuting right action of $\sym{s}$ by place permutations, i.e.\ $(m_1 \otimes \dotsb \otimes m_s) \cdot \sigma = m_{\sigma(1)} \otimes \dotsb \otimes m_{\sigma(s)}$ where $m_1,\dotsc, m_s \in M$, $\sigma \in \sym{s}$. As such, if $n \geq rs$, then $f_{rs} M^{\otimes s}$ is a $(F\sym{rs}, F\sym{s})$-bimodule.

On the other hand, $(f_r M)^{\otimes s}$ is a left $F(\sym{r} \wr \sym{s})$-module via
$$ (\sigma_1,\dotsc,\sigma_s) \tau \cdot (m_1 \otimes \dotsb \otimes m_s) = \sigma_1 m_{\tau^{-1}(1)} \otimes \dotsb \otimes \sigma_s m_{\tau^{-1}(s)}$$ and we can make it into a $(F(\sym{r} \wr \sym{s}), F\sym{s})$-bimodule by allowing $\sym{s}$ to act trivially on its right, while $F\sym{s}$ is naturally a $(F\sym{s},F\sym{s})$-bimodule and we can make it into a $(F(\sym{r} \wr \sym{s}), F\sym{s})$-bimodule by allowing $(\sym{r})^s$ to act trivially on its left.  Thus, $(f_r M)^{\otimes s} \otimes_F F\sym{s}$ is a $(F(\sym{r} \wr \sym{s}), F\sym{s})$-bimodule via the diagonal action.

For each $1\leq i\leq s$ and $\sigma\in \sym{r}$, we write $\sigma[i]\in \sym{rs}$ for the permutation sending $(i-1)r+j$ to $(i-1)r+\sigma(j)$ for each $1\leq j\leq r$, and fixing everything else pointwise; also, let $\sym{r}[i] = \{ \sigma[i] \mid \sigma \in \sym{r} \}$.  For $\tau\in \sym{s}$, we write $\tau^{[r]}\in\sym{rs}$ for the permutation sending $(i-1)r+j$ to $(\tau(i)-1)r+j$ for each $1\leq i\leq s$ and $1\leq j \leq r$; also, let $\sym{s}^{[r]} = \{ \tau^{[r]} \mid \tau \in \sym{s} \}$. We identify $\sym{r}\wr \sym{s}$ with the subgroup $(\prod_{i=1}^s \sym{r} [i])\sym{s}^{[r]}$ of $\sym{rs}$.

With the above understanding, we have the following result.

\begin{thm} \label{T:main}
Let $n, r,s \in \mathbb{Z}^+$ with $n \geq rs$, and let $M$ be an $S(n,r)$-module.  Then
$$
f_{rs} M^{\otimes s} \cong \Ind_{\sym{r} \wr \sym{s}}^{\sym{rs}} ((f_r M)^{\otimes s} \otimes_F F\sym{s})$$ as $(F\sym{rs}, F\sym{s})$-bimodules.
\end{thm}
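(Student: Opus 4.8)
The plan is to construct one explicit bimodule homomorphism out of the induced module via the adjunction for induction, and then to prove it is an isomorphism by combining a surjectivity argument with a dimension count.

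First I would unwind the left-hand side through the weight-space formula. Writing $\omega_{rs} = (1,\dots,1,0,\dots,0) \in \Lambda(n,rs)$ and iterating \eqref{E:weight}, we get
$$f_{rs}M^{\otimes s} = (M^{\otimes s})^{\omega_{rs}} = \bigoplus M^{\alpha^{(1)}}\otimes_F\cdots\otimes_F M^{\alpha^{(s)}},$$
the sum over tuples $(\alpha^{(1)},\dots,\alpha^{(s)}) \in \Lambda(n,r)^s$ with $\sum_k \alpha^{(k)} = \omega_{rs}$. Since the entries of $\omega_{rs}$ are $0$ or $1$, such tuples correspond precisely to ordered set partitions $P = (B_1,\dots,B_s)$ of $\{1,\dots,rs\}$ into blocks of size $r$, with $B_k$ the support of $\alpha^{(k)}$; write $W_P$ for the corresponding summand. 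By Lemma \ref{L:iso}(i) the left action of $g \in \sym{rs}$ via $e_{rs}$ sends $W_P$ to $W_{gP}$, where $gP = (gB_1,\dots,gB_s)$, while the right place-permutation action of $\sigma \in \sym{s}$ reindexes blocks as $(B_1,\dots,B_s)\mapsto(B_{\sigma(1)},\dots,B_{\sigma(s)})$. The standard partition $P_0$ with blocks $B_k^0 = \{(k-1)r+1,\dots,kr\}$ has stabiliser $\prod_i \sym{r}[i]$, and $\sym{rs}$ acts transitively on all such partitions.

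Next, recall $\Ind_{\sym{r}\wr\sym{s}}^{\sym{rs}} N = F\sym{rs}\otimes_{F(\sym{r}\wr\sym{s})} N$ with $N = (f_rM)^{\otimes s}\otimes_F F\sym{s}$. By the universal property of this tensor product, to define a $(F\sym{rs},F\sym{s})$-bimodule map $\Phi$ out of it I only need a map $\Phi_0 : N \to f_{rs}M^{\otimes s}$ that is left $F(\sym{r}\wr\sym{s})$-linear (for the action on the target restricted via $e_{rs}$) and right $F\sym{s}$-linear, and then set $\Phi(g\otimes n) = e_{rs}(g)\Phi_0(n)$; that this is well defined and a bimodule map follows formally since the left and right actions commute. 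For $1 \le k \le s$ put $\gamma_k = (1\,k)^{[r]} \in \sym{s}^{[r]}$, which sends $j\mapsto (k-1)r+j$ on $\{1,\dots,r\}$ and hence, by Lemma \ref{L:iso}(i), carries $M^{\omega_r}$ onto the weight space indexed by $B_k^0$. I would define
$$\Phi_0\big((m_1\otimes\cdots\otimes m_s)\otimes\tau\big) = \big(e_r(\gamma_1)m_1\otimes\cdots\otimes e_r(\gamma_s)m_s\big)\cdot\tau,$$
placing the $k$th vector into block $B_k^0$ and then applying the right place-permutation by $\tau$; right $F\sym{s}$-linearity is then immediate.

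The main work, and the only real obstacle, is verifying left $F(\sym{r}\wr\sym{s})$-linearity of $\Phi_0$. Because the two actions commute and the elements $(m_1\otimes\cdots\otimes m_s)\otimes 1$ generate $N$ as a right $F\sym{s}$-module, it suffices to check $\Phi_0(h\cdot n) = e_{rs}(h)\Phi_0(n)$ on these generators for $h$ ranging over the base group $\prod_i\sym{r}[i]$ and over $\sym{s}^{[r]}$ separately. In each case both sides are tensors whose $k$th factors have the form $e_r(\rho)m_k$ and $e_r(\rho')m_k$ for permutations $\rho,\rho'\in\sym{rs}$ read off from the definitions of $\sigma[i]$, $\tau^{[r]}$ and $\gamma_k$ (here one uses that $e_{rs}(g)$ acts diagonally on tensor factors by $e_r(g)$). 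Since $m_k\in M^{\omega_r}$ has support $\{1,\dots,r\}$, Lemma \ref{L:iso}(ii) reduces the required equality to the agreement of $\rho$ and $\rho'$ merely on $\{1,\dots,r\}$; a direct computation shows that both send $j$ to $(k-1)r+\sigma_k(j)$ in the base-group case and to $(\pi(k)-1)r+j$ in the $\sym{s}^{[r]}$ case, so they agree. This is exactly where the argument turns: the within-block parts of $h$ do not match up on the nose, and it is Lemma \ref{L:iso}(ii) that lets one ignore them.

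Finally I would show $\Phi$ is an isomorphism. It is surjective: taking $\tau = 1$ and letting the $m_k$ range over $M^{\omega_r}$, the images $\Phi_0((m_1\otimes\cdots\otimes m_s)\otimes 1)$ span $W_{P_0}$ by Lemma \ref{L:iso}(i), and applying $e_{rs}(g)$ spreads this over every $W_{gP_0}$, which by transitivity exhausts the decomposition above. A dimension count then finishes the proof: with $d = \dim_F f_rM$, each of the $(rs)!/(r!)^s$ summands $W_P$ has dimension $d^s$, whereas the induced module has dimension $[\sym{rs}:\sym{r}\wr\sym{s}]\cdot d^s\, s! = \tfrac{(rs)!}{(r!)^s s!}\cdot d^s\, s!$, and the two are equal; hence the surjection $\Phi$ is a bimodule isomorphism.
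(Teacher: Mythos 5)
Your argument is correct in its essentials, and it takes a genuinely different route from the paper's. The paper fixes a transversal $T$ of the base group $\prod_{i=1}^s \sym{r}[i]$ in $\sym{rs}$, matches the summands $t \otimes ((f_rM)^{\otimes s} \otimes 1)$ of the induced module with the weight summands via an explicit combinatorial bijection $T \to \Lambda$, defines $\phi$ as a direct sum of bijections $\phi_t$, and only then verifies bimodule linearity. You instead obtain the map for free from the universal property of induction and concentrate all the work in the $F(\sym{r}\wr\sym{s})$-equivariance of $\Phi_0$; that check is correct (both the base-group and the $\sym{s}^{[r]}$ cases, and the reduction to the generators $(m_1\otimes\dotsb\otimes m_s)\otimes 1$), and it turns on exactly the same device as the paper's computation: Lemma \ref{L:iso}(ii) lets you compare the two permutations only on $\{1,\dots,r\}$, the support of $\omega_r$. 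What the adjunction buys you is that you never choose coset representatives or auxiliary permutations (the paper's $\tau_{t,i}$; only your $\gamma_k$ survive); the price is that bijectivity is no longer built into the construction and must be supplied separately.

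That price is where the one genuine gap sits. Your final step --- surjectivity plus the count $\frac{(rs)!}{(r!)^s}d^s = \frac{(rs)!}{(r!)^s s!}\cdot d^s\cdot s!$ --- proves injectivity only when $d = \dim_F f_rM$ is finite. The theorem is stated for an arbitrary $S(n,r)$-module $M$, and nothing in the hypotheses forces $f_rM$ to be finite-dimensional; for infinite $d$ a surjection between spaces of equal dimension need not be injective, so as written your proof covers only finite-dimensional $M$, whereas the paper's proof (being a sum of bijections) covers the general case. The repair is cheap and stays inside your setup: decompose the induced module as $\bigoplus_{t,\tau} t \otimes ((f_rM)^{\otimes s} \otimes \tau)$, with $t$ running over a transversal of $\sym{r}\wr\sym{s}$ in $\sym{rs}$ and $\tau \in \sym{s}$, and observe that $\Phi$ maps this summand injectively into $W_P$ with $P = (tB^0_{\tau(1)},\dots,tB^0_{\tau(s)})$, being a composite of bijections from Lemma \ref{L:iso}(i) with a right place permutation. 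The assignment $(t,\tau) \mapsto P$ is injective: equality of two such ordered partitions forces $t^{-1}t'$ to permute the blocks $\{B^0_1,\dots,B^0_s\}$ among themselves, i.e.\ $t^{-1}t' \in \sym{r}\wr\sym{s}$, whence $t = t'$ and then $\tau = \tau'$. So distinct summands of the source land in distinct direct summands of the target, and $\Phi$ is injective with no dimension count. (Alternatively, keep your proof for finite-dimensional $M$ and deduce the general case by writing $M$ as the directed union of its finite-dimensional submodules, all functors involved commuting with directed unions.)
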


\begin{proof}
Firstly, by \eqref{E:weight},
\begin{equation} \label{E:weight2}
f_{rs} M^{\otimes s} = (M^{\otimes s})^{\omega_{rs}} = \bigoplus_{(\alpha^{[1]}, \dotsc, \alpha^{[s]}) \in \Lambda} M^{\alpha^{[1]}} \otimes_F \dotsb \otimes_F M^{\alpha^{[s]}},
\end{equation}
where $\Lambda = \{ (\alpha^{[1]}, \dotsc, \alpha^{[s]}) \mid \alpha^{[i]} \in \Lambda(n,r)\ \forall i,\ \sum_{i=1}^s \alpha^{[i]} = \omega_{rs} \}$.
Also,
\begin{align*}
\Ind_{\sym{r} \wr \sym{s}}^{\sym{rs}} ((f_r M)^{\otimes s} \otimes_F F\sym{s}) &= F\sym{rs} \otimes_{F(\sym{r} \wr \sym{s})} ((f_r M)^{\otimes s} \otimes_F F\sym{s}) \\
&= \bigoplus_{t \in T} t \otimes ((f_r M)^{\otimes s} \otimes 1),
\end{align*}
where $T$ is a fixed set of left coset representatives of $\prod_{i=1}^s \sym{r}[i]$ in $\sym{rs}$.

The symmetric group $\sym{rs}$ acts naturally and transitively on the set
$$\Omega = \left\{ (A_1,\dotsc, A_s) \mid |A_i| = r\ \forall i,\ \bigcup_{i=1}^s A_i = \{ 1,\dotsc, rs\} \right\},$$
with $\prod_{i=1}^s \sym{r}[i]$ being the stabiliser of $(\{1,\dotsc, r\},\dotsc, \{(s-1)r + 1,\dotsc, rs\})$.  As such, the function $\theta : T \to \Omega$ defined by
$$t \mapsto (\{t(1),\dotsc, t(r)\},\dotsc, \{t((s-1)r+1), \dotsc, t(rs)\})$$ is a bijection.

On the other hand, each $r$-element subset $A$ of $\{1,\dotsc, rs\}$ corresponds naturally to a distinct element $\alpha_A = ((\alpha_A)_1,\dotsc, (\alpha_A)_n) \in \Lambda(n,r)$ defined by $(\alpha_A)_j = 1$ if $j \in A$, and $0$ otherwise.  This induces a bijection $\chi: \Omega \to \Lambda$ defined by
$$
(A_1,\dotsc, A_s) \mapsto (\alpha_{A_1}, \dotsc, \alpha_{A_s}).$$

For each $t \in T$ and $i = 1,\dotsc, s$, let $\tau_{t,i}$ be any fixed element of $\sym{rs}$ satisfying $\tau_{t,i}(j) = t((i-1)r + j)$ for $j = 1, \dotsc, r$ (we shall see below that how $\tau_{t,i}$ acts on other points is immaterial for our purposes).  Let $\alpha^{[i]} = \tau_{t,i} \cdot \omega_r$; then $\alpha^{[i]} = \alpha_{A_i}$, where
$$
A_i = \{ \tau_{t,i}(1), \dotsc, \tau_{t,i}(r) \} = \{ t((i-1)r + 1), \dotsc, t(ir) \}.
$$
Thus $(\alpha^{[1]},\dotsc, \alpha^{[s]}) = \chi(\theta (t)) \in \Lambda$.
Let
\begin{align*}
\phi_t : t \otimes ((f_r M)^{\otimes s} \otimes 1) &\to M^{\alpha^{[1]}} \otimes_F \dotsb \otimes_F M^{\alpha^{[s]}} \\
t \otimes ((x_1 \otimes \dotsb \otimes x_s) \otimes 1) &\mapsto e_r(\tau_{t,1}) x_1 \otimes \dotsb \otimes e_r(\tau_{t,s}) x_s \quad (x_1, \dotsc, x_s \in f_rM).
\end{align*}
Since each $\tau_{t,i}$ maps $f_r M = M^{\omega_r}$ bijectively onto $M^{\tau_{t,i} \cdot \omega_r} = M^{\alpha^{[i]}}$ by Lemma \ref{L:iso}(i), we see that $\phi_t$ is bijective.  Now let
$$\phi = \bigoplus_{t \in T} \phi_t : \Ind_{\sym{r} \wr \sym{s}}^{\sym{rs}} ((f_r M)^{\otimes s} \otimes_F F\sym{s}) \to f_{rs} M^{\otimes s}.$$  This is well-defined and bijective by \eqref{E:weight2} (note that $\chi \circ \theta : T \to \Lambda$ is a bijection).

Let $g \in \sym{rs}$ and $h \in \sym{s}$.  Then if $t \in T$ and $x_1,\dotsc, x_s \in f_rM$, we have
\begin{align*}
g (t \otimes ((x_1 \otimes \dotsb \otimes x_s) \otimes 1)) h &=
gth^{[r]} \otimes ((x_{h(1)} \otimes \dotsb \otimes x_{h(s)}) \otimes 1) \\
&= t' \otimes ((e_r(g_1)x_{h(1)} \otimes \dotsb \otimes e_r(g_s)x_{h(s)}) \otimes 1),
\end{align*}
where $gth^{[r]} = t' \prod_{i=1}^s g_i[i]$ with $t' \in T$ and $g_i \in \sym{r}$ for all $i$.  Thus it is sent by $\phi$ to $e_r(\tau_{t',1})e_r(g_1) x_{h(1)} \otimes \dotsb \otimes e_r(\tau_{t',s})e_r(g_s) x_{h(s)}$.  Note that
\begin{multline*}
\tau_{t',i}(j) = t'((i-1)r + j) = gth^{[r]} (\prod_{i=1}^s g_i[i])^{-1} ((i-1)r + j) \\
= gth^{[r]} ((i-1) + g_i^{-1}(j)) = gt((h(i)-1) + g_i^{-1}(j)) = g\tau_{t,h(i)}g_i^{-1}(j),
\end{multline*}
so that $e_r(\tau_{t',i})e_r(g_i) x_{h(i)} = e_r(g\tau_{t,h(i)}g_i^{-1})e_r(g_i) x_{h(i)}$ by Lemma \ref{L:iso}(ii). Hence,
\begin{align*}
\phi(g (t \otimes ((x_1 \otimes \dotsb \otimes x_s) \otimes 1)) h) &=
 e_r(g) e_r(\tau_{t,h(1)}) x_{h(1)} \otimes \dotsb \otimes e_r(g) e_r(\tau_{t,h(s)}) x_{h(s)} \\
&= e_{rs}(g) (e_r(\tau_{t,1})x_1 \otimes \dotsb \otimes e_r(\tau_{t,s})x_s) h \\
&= g(\phi(t \otimes ((x_1 \otimes \dotsb \otimes x_s) \otimes 1))) h.
\end{align*}
Thus $\phi$ is an $(F\sym{rs},F\sym{s})$-bimodule isomorphism.
\end{proof}

The $s$-th Lie power $L^s(M)$, the $s$-th exterior power $\E^s(M)$ and the $s$-th symmetric power $S^s(M)$ of the left $S(n,r)$-module $M$ may be defined as follows:
\begin{align*}
L^s(M) &= (M^{\otimes s}) \upsilon_s; \\
\E^s(M) &= (M^{\otimes s}) (\sum_{\sigma \in \sym{s}} \sgn(\sigma) \sigma); \\
S^s(M) &= M^{\otimes s} \otimes_{F\sym{s}} F.
\end{align*}
Here, $\upsilon_s$ is the Dynkin-Specht-Wever element mentioned in Section \ref{S:intro}, and $\sgn$ is the signature representation of $\sym{s}$.

\begin{cor} \label{C:isom}
Let $n, r,s \in \mathbb{Z}^+$ with $n \geq rs$, and $M$ be an $S(n,r)$-module.  Then
\begin{align*}
f_{rs} L^s(M) &\cong \Ind_{\sym{r} \wr \sym{s}}^{\sym{rs}} ((f_r M)^{\otimes s} \otimes_F \Lie(s)); \\
f_{rs} \E^s(M) &\cong \Ind_{\sym{r} \wr \sym{s}}^{\sym{rs}} ((f_r M)^{\otimes s} \otimes_F \sgn); \\
f_{rs} S^s(M) &\cong \Ind_{\sym{r} \wr \sym{s}}^{\sym{rs}} (f_r M)^{\otimes s}
\end{align*}
as left $F\sym{rs}$-modules.
\end{cor}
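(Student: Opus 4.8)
The plan is to deduce all three isomorphisms directly from Theorem \ref{T:main}, by exploiting that each of $L^s(M)$, $\E^s(M)$, $S^s(M)$ is built from $M^{\otimes s}$ using only its right $\sym{s}$-module structure, together with the fact that the Schur functor $f_{rs}$ commutes with these right operations. So the whole corollary reduces to transporting three right $\sym{s}$-constructions across the bimodule isomorphism already established.

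First I would record the key functorial property of $f_{rs}$. Since $f_{rs}N = \xi_{\omega_{rs}}N$ is obtained by left multiplication by the idempotent $\xi_{\omega_{rs}}$ (equivalently, by projecting onto a direct summand), $f_{rs}$ is exact; and since the left $S(n,rs)$-action on $M^{\otimes s}$ commutes with the right $\sym{s}$-action, left multiplication by $\xi_{\omega_{rs}}$ commutes with every right $\sym{s}$-operation. Concretely this gives
\begin{align*}
f_{rs}L^s(M) &= (f_{rs}M^{\otimes s})\,\upsilon_s, \\
f_{rs}\E^s(M) &= (f_{rs}M^{\otimes s})\bigl(\textstyle\sum_{\sigma\in\sym{s}}\sgn(\sigma)\sigma\bigr), \\
f_{rs}S^s(M) &= (f_{rs}M^{\otimes s})\otimes_{F\sym{s}}F.
\end{align*}
The first two hold because right multiplication by an element of $F\sym{s}$ is a left $S(n,rs)$-module homomorphism on $M^{\otimes s}$, and $f_{rs}$, being exact, carries the image of this map to the image of the same multiplication on $f_{rs}M^{\otimes s}$; the last holds because the weight-space decomposition of $M^{\otimes s}$ is a decomposition into right $F\sym{s}$-submodules, so $\xi_{\omega_{rs}}$ passes through $\otimes_{F\sym{s}}F$.

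Next I would substitute the $(F\sym{rs},F\sym{s})$-bimodule isomorphism of Theorem \ref{T:main} into each right-hand side and push the right $\sym{s}$-operation through the induction. Because $F\sym{rs}$ is free as a right $F(\sym{r}\wr\sym{s})$-module, $\Ind_{\sym{r}\wr\sym{s}}^{\sym{rs}}(-) = F\sym{rs}\otimes_{F(\sym{r}\wr\sym{s})}(-)$ is exact, and the right $\sym{s}$-action lives entirely on the $F\sym{s}$ tensor factor; hence each right operation commutes with $\Ind$ by associativity of tensor products. It then remains to identify the three resulting $F\sym{s}$-factors: right multiplication by $\upsilon_s$ replaces $F\sym{s}$ by $F\sym{s}\upsilon_s = \Lie(s)$; right multiplication by $\sum_{\sigma}\sgn(\sigma)\sigma$ replaces it by the one-dimensional left $F\sym{s}$-module $F\sym{s}\bigl(\sum_{\sigma}\sgn(\sigma)\sigma\bigr)\cong\sgn$, using $\tau\bigl(\sum_{\sigma}\sgn(\sigma)\sigma\bigr) = \sgn(\tau)\sum_{\sigma}\sgn(\sigma)\sigma$; and $\otimes_{F\sym{s}}F$ replaces it by $F\sym{s}\otimes_{F\sym{s}}F\cong F$, the trivial module, on which $\sym{s}$ acts trivially, thereby collapsing the tensor factor altogether and leaving $\Ind_{\sym{r}\wr\sym{s}}^{\sym{rs}}(f_rM)^{\otimes s}$. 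This produces the three stated isomorphisms.

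The computations are routine once this framework is in place; the one point that needs care is the interchange of $f_{rs}$, and of $\Ind$, with the right $\sym{s}$-operations, especially for $S^s(M)$, where $\otimes_{F\sym{s}}F$ is only right exact. Here the safe route is to invoke the weight-space decomposition: place permutation preserves weights (the weight of $m_1\otimes\dotsb\otimes m_s$ is unchanged by reordering the factors), so $M^{\otimes s} = \bigoplus_\gamma (M^{\otimes s})^\gamma$ is a decomposition into right $F\sym{s}$-submodules, whence $\otimes_{F\sym{s}}F$ commutes with extracting the $\omega_{rs}$-summand; on the induced-module side the freeness of $F\sym{rs}$ over $F(\sym{r}\wr\sym{s})$ legitimises the corresponding interchange. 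I would verify these two commutations explicitly and treat the remaining identifications of the $F\sym{s}$-factors as formal.
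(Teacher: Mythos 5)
Your proposal is correct and follows essentially the same route as the paper: post-multiply both sides of the bimodule isomorphism in Theorem \ref{T:main} by $\upsilon_s$ and by $\sum_{\sigma\in\sym{s}}\sgn(\sigma)\sigma$, and apply $\otimes_{F\sym{s}}F$ for the symmetric power, then identify the resulting $F\sym{s}$-factors as $\Lie(s)$, $\sgn$ and the trivial module. The paper states this in two sentences and leaves implicit the commutation checks (of $f_{rs}$ and of $\Ind$ with the right $\sym{s}$-operations) that you carry out explicitly; your added detail is sound but not a different argument.
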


\begin{proof}
Post-multiply $\upsilon_s$ and $\sum_{\sigma \in \sym{s}} \sgn(\sigma) \sigma$ to both sides of the isomorphism in Theorem \ref{T:main} to obtain the first two isomorphisms.  The third isomorphism is obtained by taking tensor product with $F$ over $F\sym{s}$ on the right of both sides of the same isomorphism.
\end{proof}


\begin{thebibliography}{9999}

\bibitem[BE]{BE}
R. M. Bryant and K. Erdmann, `Block components of the Lie module for the symmetric group', preprint, 2011.

\bibitem[DE]{DE}
S. Donkin and K. Erdmann, `Tilting modules, symmetric functions, and the module structure of the free Lie algebra', {\em J. Algebra} \textbf{203} (69--90), 1998.

\bibitem[ELT]{ELT}
K. Erdmann, K. J. Lim and K. M. Tan, `The complexity of the Lie module', in preparation.

\bibitem[G]{G}
J. A. Green, {\em Polynomial Representations of $\mathrm{GL}_n$}, Lecture Notes in Mathematics \textbf{830}, Springer, 2007.

\end{thebibliography}
\end{document}